\documentclass[12pt]{article}
\usepackage{amsmath}
\usepackage{amssymb}
\usepackage{amscd}
\usepackage{amsthm}

\def\ZZ{{\mathbb Z}}
\def\RR{{\mathbb R}}

\def\QQ{{\mathbb Q}}
\def\PP{{\mathbb P}}
\def\CC{{\mathbb C}}
\def\cX{{\mathcal X}}
\def\cO{{\mathcal O}}
\def\bk{{\bf k}}

\newtheorem{lemma}{Lemma}[section]
\newtheorem{theorem}[lemma]{Theorem}
\newtheorem{corollary}[lemma]{Corollary}

\theoremstyle{definition}

\newtheorem{example}[lemma]{Example}

\newtheorem{remark}[lemma]{Remark}

\theoremstyle{remark}
\newtheorem*{proof*}{Proof}
\numberwithin{equation}{section}

\title{On the Grothendieck groups of toric stacks}
\author{Zheng Hua}

\begin{document}
\maketitle
\section{Introduction}
In this note, we prove that the Grothendieck group of a smooth
complete toric Deligne-Mumford stack is torsion free. This statement holds when the generic point is stacky. We also construct an example of open toric stack with torsion in K-theory. This is a part
of the author's Ph.D thesis. A similar result has been proved by Goldin, Harada, Holm,
Kimura and Knutson in \cite{symp} using symplectic methods.

\section{Grothendieck groups of reduced stacks}
Let $N$ be a free abelian group of rank $d$ and $N_\RR=N\otimes
\RR$. Given a complete simplicial fan $\Sigma$ in $N_\RR$, one
chooses an integral element $v_i$ in each of the one-dimensional
cones of $\Sigma$. This defines a stacky fan $\bf{\Sigma}$ in the
sense of $\cite{BCS}$. We denote the corresponding toric
Deligne-Mumford stack by $\cX_{\bf\Sigma}$. Recall the Grothendieck
group $K_0(\cX_{\bf\Sigma})$ is defined to be the free abelian group
generated by all formal combinations of coherent sheaves on
$\cX_{\bf\Sigma}$ modding out by the short exact sequences. Each
element $v_i$ corresponds to a toric invariant divisor $E_i$. This
divisor $E_i$ determines an invertible sheaf $\cO(E_i)$. We denote
its equivalent class in $K_0(\cX_{\bf\Sigma})$ by $R_i$. The ring
structure of $K_0(\cX_{\bf\Sigma})$ is given by tensor product of
coherent sheaves. K-theory of smooth toric stacks has been studied
in \cite{BH}. In particular they computed $K_0(\cX_{\bf\Sigma})$
explicitly by writing out its generators and relations.
\begin{theorem}\cite{BH}\label{bh}
Let $B$ be the quotient of the Laurent polynomial ring
$\ZZ[x_1,x_1^{-1},\ldots,x_n,x_n^{-1}]$ by the ideal generated by
the relations
\begin{itemize}
\item
$\prod_{i=1}^n x_i^{\langle m,v_i\rangle}=1$ for any dual vector
$m\in M=Hom(N,\ZZ)$,
\item
$\prod_{i\in S}(1-x_i)=0$ for any set $S\subseteq [1,\ldots,n]$ such
that $\{v_i|i\in S\}$ are not contained in any cone of $\Sigma$.
\end{itemize}
Then the map from $B$ to $K_0(\cX_{\bf\Sigma})$ which sends $x_i$ to
$R_i$ is an isomorphism of rings.
\end{theorem}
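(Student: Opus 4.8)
The plan is to realize $\cX_{\bf\Sigma}$ as a global quotient stack and compute its Grothendieck group as an equivariant K-theory, so that the two families of relations arise from two standard mechanisms: the passage from a torus to a subgroup (the linear relations) and the excision sequence of an open immersion (the Stanley--Reisner relations). Following \cite{BCS}, the stacky fan ${\bf\Sigma}$ gives a homomorphism $\beta\colon\ZZ^n\to N$, $e_i\mapsto v_i$, and dualizing produces an action of the diagonalizable group $G=\ker\bigl((\CC^\ast)^n\to T_N\bigr)$ on $\CC^n$, where $T_N=N\otimes\CC^\ast$; one has $\cX_{\bf\Sigma}=[Z/G]$ with $Z=\CC^n\setminus V(J_\Sigma)$ the complement of the vanishing locus of the irrelevant ideal. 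A point lies in $Z$ exactly when its set of vanishing coordinates spans a cone of $\Sigma$, so the closed complement is the union of coordinate subspaces $Y=\CC^n\setminus Z=\bigcup_S V(S)$, taken over the minimal $S$ with $\{v_i\mid i\in S\}$ not contained in any cone. First I would record the tautological identification $K_0(\cX_{\bf\Sigma})\cong K_0^G(Z)$ of coherent sheaves on the quotient stack with $G$-equivariant coherent sheaves on $Z$.

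Second I would compute $K_0^G(\CC^n)$. As $\CC^n$ is a linear $G$-representation it is $G$-equivariantly contractible, so homotopy invariance of equivariant K-theory (Thomason) gives $K_0^G(\CC^n)\cong K_0^G(\mathrm{pt})=R(G)$. Since $G$ is diagonalizable, $R(G)=\ZZ[\widehat G]$ is the group ring of its character group, and the exact sequence $0\to M\xrightarrow{\beta^\vee}\ZZ^n\to\widehat G\to 0$, with $\beta^\vee(m)=(\langle m,v_i\rangle)_i$, presents $\widehat G$ as the cokernel of $\beta^\vee$. Writing $x_i$ for the restriction to $G$ of the $i$-th character of $(\CC^\ast)^n$, this yields
$$R(G)\cong\ZZ[x_1^{\pm 1},\dots,x_n^{\pm 1}]\big/\bigl(\textstyle\prod_i x_i^{\langle m,v_i\rangle}-1 : m\in M\bigr),$$
which is exactly the first family of relations. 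A routine check identifies the image of $x_i$ under $K_0^G(\CC^n)\to K_0^G(Z)=K_0(\cX_{\bf\Sigma})$ with the class $R_i=[\cO(E_i)]$, matching generators.

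Third I would feed this into the localization (excision) sequence for the open immersion $j\colon Z\hookrightarrow\CC^n$ with closed complement $i\colon Y\hookrightarrow\CC^n$,
$$K_0^G(Y)\xrightarrow{\;i_\ast\;}K_0^G(\CC^n)\xrightarrow{\;j^\ast\;}K_0^G(Z)\to 0.$$
Right-exactness identifies $K_0(\cX_{\bf\Sigma})$ with the quotient of $R(G)$ by $\mathrm{im}(i_\ast)$, so the task becomes to show this image equals the ideal generated by the products $\prod_{i\in S}(1-x_i)$. For a minimal non-face $S$ the subspace $V(S)=\{x_i=0:i\in S\}$ is cut out of $\CC^n$ by the regular sequence $(x_i)_{i\in S}$; its Koszul resolution gives $[\cO_{V(S)}]=\prod_{i\in S}(1-x_i^{-1})$, which generates the same ideal as $\prod_{i\in S}(1-x_i)$ since each $x_i$ is a unit, and this class maps to $0$ in $K_0^G(Z)$ because $V(S)\cap Z=\varnothing$. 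This gives one inclusion and hence the surjectivity of the map $B\to K_0(\cX_{\bf\Sigma})$.

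The hard part will be the reverse inclusion, namely that $\mathrm{im}(i_\ast)$ contains no relations beyond these Koszul classes. I expect to establish this by devissage on $Y$: the $T=(\CC^\ast)^n$-orbit stratification of $Y$ is $G$-stable, and the associated localization sequences show that $K_0^G(Y)$ is generated by the classes $[\cO_{V(T)}]$ of structure sheaves of the coordinate subspaces $V(T)\subseteq Y$. Each such $V(T)$, being irreducible and contained in the finite union $\bigcup_S V(S)$, lies in some $V(S)$ with $S$ a minimal non-face, i.e.\ $S\subseteq T$; hence $[\cO_{V(T)}]$, which up to a unit equals $\prod_{i\in T}(1-x_i)$, lies in the ideal generated by $\prod_{i\in S}(1-x_i)$. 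Therefore $\mathrm{im}(i_\ast)$ is precisely the Stanley--Reisner ideal, and combining the three steps yields the ring isomorphism $B\cong R(G)/(\text{Stanley--Reisner ideal})\cong K_0(\cX_{\bf\Sigma})$ sending $x_i$ to $R_i$.
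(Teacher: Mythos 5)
You should first be aware that the paper contains no proof of this statement to compare against: Theorem \ref{bh} is imported from \cite{BH} as a black box, and the paper's own contribution (Theorem \ref{free}) takes this presentation as its starting point. Judged on its own terms, your argument is a genuine and essentially correct proof route. The three pillars --- the identification $K_0(\cX_{\bf\Sigma})\cong K_0^G(Z)$ coming from the quotient presentation $[Z/G]$ of \cite{BCS}, the computation $K_0^G(\CC^n)\cong R(G)\cong \ZZ[x_1^{\pm1},\ldots,x_n^{\pm1}]/\bigl(\prod_i x_i^{\langle m,v_i\rangle}-1\bigr)$ via equivariant homotopy invariance, and the right-exact localization sequence $K_0^G(Y)\to K_0^G(\CC^n)\to K_0^G(Z)\to 0$ --- are all correct applications of Thomason's equivariant K-theory, with the understanding that on the singular locus $Y$ you are working with the K-theory of equivariant coherent sheaves ($G$-theory), while on the smooth $\CC^n$ and $Z$ this agrees with the K-theory of vector bundles. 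The Koszul computation $[\cO_{V(S)}]=\prod_{i\in S}(1-x_i^{-1})$, generating the same ideal as $\prod_{i\in S}(1-x_i)$ because the $x_i$ are units, is also right, and a pleasant feature of your route is that it nowhere uses completeness of $\Sigma$, consistent with the paper's later observation (Section \ref{open}) that Theorem \ref{bh} holds for non-complete stacks.

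Two points in your last step need tightening, though neither is fatal. First, for ``the quotient of $R(G)$ by $\mathrm{im}(i_\ast)$'' to be a quotient of rings you must note that $\mathrm{im}(i_\ast)$ is an ideal, which follows from the projection formula $i_\ast(\alpha\cdot i^\ast\beta)=i_\ast(\alpha)\cdot\beta$. Second, the devissage claim that $K_0^G(Y)$ is generated by the classes $[\cO_{V(T)}]$ is false if read as generation of an abelian group: already for a single coordinate subspace, homotopy invariance gives $K_0^G(V(T))\cong R(G)$, which is generated by the character twists $[\cO_{V(T)}\otimes\chi]$, not by $[\cO_{V(T)}]$ alone. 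What your argument actually needs, and what is true, is that $\mathrm{im}(i_\ast)$ is generated \emph{as an ideal} (equivalently, as an $R(G)$-module, via the projection formula) by the classes $i_\ast[\cO_{V(T)}]$. This is proved by induction on the number of irreducible components of $Y$: writing $Y=V(S_1)\cup Y'$, the open set $U=Y\setminus V(S_1)$ lies in $Y'$, the restriction $K_0^G(Y')\to K_0^G(U)$ is onto by localization applied to $Y'$, and the inductive generators of $K_0^G(Y')$ push forward to $Y$ to give lifts of generators of $K_0^G(U)$; together with $\mathrm{im}\bigl(K_0^G(V(S_1))\to K_0^G(Y)\bigr)$, which homotopy invariance identifies with twisted structure sheaves, these generate $K_0^G(Y)$. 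With these repairs your conclusion $\mathrm{im}(i_\ast)=\bigl(\prod_{i\in S}(1-x_i)\bigr)$ stands, and the isomorphism $B\cong K_0(\cX_{\bf\Sigma})$ follows as you state.
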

Our main result is the following.
\begin{theorem}\label{free}
The Grothendieck group $K_0(\cX_{\bf\Sigma})$ of a complete smooth
toric Deligne-Mumford stack $\cX_{\bf\Sigma}$ is a free $\ZZ$
module.
\end{theorem}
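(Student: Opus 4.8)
The plan is to deduce everything from the explicit presentation in Theorem \ref{bh}: it suffices to prove that the ring $B$ is a free $\ZZ$-module. Since $\ZZ$ is a principal ideal domain, a finitely generated torsion-free module is automatically free, so I would split the statement into (a) $B$ is finitely generated as a $\ZZ$-module, and (b) $B$ is torsion-free. Granting (a), torsion-freeness is equivalent to the assertion that the dimension $\dim_\bk(B\otimes_\ZZ\bk)$ is the same integer for $\bk=\QQ$ and for every residue field $\bk=\mathbb F_p$; equivalently, that the length of $B_\bk:=B\otimes_\ZZ\bk$ is independent of the characteristic. Thus the whole problem becomes a uniform-in-$\bk$ length computation for the $\bk$-algebra $B_\bk$.

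First I would establish (a) by showing that $\mathrm{Spec}(B_\bk)$ is a finite (zero-dimensional) subscheme of the torus $(\bk^\times)^n$. The type-1 relations cut out a subgroup whose dimension is $n-d$, and the essential observation is that the Stanley--Reisner (type-2) relations $\prod_{i\in S}(1-x_i)=0$ force every closed point to satisfy: the set $\{i : x_i\neq 1\}$ is the set of rays of a single cone $\gamma\in\Sigma$. Because $\Sigma$ is simplicial, the rays of any such $\gamma$ are linearly independent, so on each of these strata the type-1 relations become a full-rank system and the stratum is zero-dimensional. Hence $B_\bk$ is finite-dimensional and $B$ is a finitely generated $\ZZ$-module.

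For the uniform length count I would pass to the variables $y_i=1-x_i$, in which the type-2 relations become the monomial relations $\prod_{i\in S}y_i=0$ and the type-1 relations become $\prod_i(1-y_i)^{\langle m,v_i\rangle}=1$. On the stratum attached to a cone $\gamma$, these multiplicative relations exactly present the representation ring of the finite abelian isotropy group $G_\gamma$ of that cone, a ring that is free over $\ZZ$ of rank $\mathrm{mult}(\gamma)=|G_\gamma|$ and whose length over any $\bk$ equals $\mathrm{mult}(\gamma)$ regardless of characteristic. Organizing $B$ by a filtration indexed by the face poset of $\Sigma$ --- geometrically, the filtration of $K_0(\cX_{\bf\Sigma})$ by the torus-orbit stratification of $\cX_{\bf\Sigma}$ --- the associated graded pieces are these characteristic-free pieces, and summing their lengths gives a count independent of $\bk$.

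The hard part will be this last step: controlling the interaction between the Stanley--Reisner relations and the multiplicative type-1 relations \emph{over $\ZZ$}, that is, proving that the face-poset filtration is finite and that no torsion is created when passing from the associated graded back to $B$. It is crucial to retain the genuinely multiplicative form of the type-1 relations: their naive linearization $\sum_i\langle m,v_i\rangle\,y_i=0$ produces the Chow ring, which already for a weighted projective stack such as $\PP(a,b)$ acquires torsion $\ZZ/\gcd(a,b)$ in degree one, whereas the multiplicative relation $\prod_i(1-y_i)^{\langle m,v_i\rangle}=1$ yields the \emph{free} group ring $\ZZ[G_\gamma]$. This is precisely where completeness of $\Sigma$ must enter: completeness makes the orbit stratification exhaustive and forces the total length to be characteristic-independent, and its failure is exactly what permits the torsion in the open example constructed later in the paper.
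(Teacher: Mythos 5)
Your reduction to Theorem \ref{bh} and the criterion ``finitely generated over $\ZZ$ plus characteristic-independent fiber dimension implies free'' is a reasonable starting point, but both halves of your plan have genuine gaps. In (a), the implication ``every fiber $B_\bk$ is zero-dimensional, hence $B$ is a finitely generated $\ZZ$-module'' is false as stated: $\ZZ[x,x^{-1}]/(2x-1)\cong\ZZ[1/2]$ is a finitely generated $\ZZ$-algebra all of whose fibers over $\mathrm{Spec}\,\ZZ$ are zero-dimensional, yet it is not finite over $\ZZ$; it is moreover torsion-free with jumping fiber dimension, so the same example shows that your length-comparison criterion in (b) genuinely needs (a) and cannot absorb its failure. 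Quasi-finiteness must be upgraded to finiteness by an actual argument. The paper does this in Lemma \ref{fd}: one shows each $1-x_i^{d_i}$ lies in every maximal ideal of $B$, hence is nilpotent because the Jacobson radical of a finitely generated $\ZZ$-algebra equals its nilradical; this bounds the exponents of monomials and produces a finite generating set. Your stratum analysis is essentially the first half of that lemma; the nilpotence step is what you are missing.

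The more serious gap is (b): the filtration whose graded pieces are the group rings of the isotropy groups is never constructed, and the sentence ``no torsion is created when passing from the associated graded back to $B$'' is a restatement of the theorem, not a proof of it. Worse, the stratification you describe (by the locus where $x_i\neq 1$) cannot compute lengths. Already for $\PP^1$ one has $B=\ZZ[x,x^{-1}]/\bigl((1-x)^2\bigr)$: set-theoretically this is a single point lying in the stratum of the zero cone, so your strata account for length $1$, while $\mathrm{length}(B_\bk)=2$; the missing length sits in the non-reduced structure transverse to the strata (the two maximal cones correspond to empty strata here), so summing lengths of strata, however indexed, does not give $\dim_\bk B_\bk$ without a serious dévissage. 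You also misidentify where completeness enters: the torus-orbit stratification is exhaustive for \emph{every} toric stack, including the torsion example of Section \ref{open}, so exhaustiveness cannot be the mechanism. What completeness actually buys is that the underlying simplicial complex of a complete simplicial fan is a sphere, so the associated Stanley--Reisner ring is Cohen--Macaulay over $\ZZ$ (\cite{CMR}); the paper then runs a Koszul-complex depth argument --- $\mathrm{depth}(g_1,\ldots,g_d,p)=d+1$ on the Cohen--Macaulay ring $A$, so the Koszul cohomologies of $(g_1,\ldots,g_d)$ and $(g_1,\ldots,g_d,p)$ are each concentrated in a single degree, and the mapping-cone long exact sequence forces multiplication by $p$ to be injective on $B$. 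Some homological or combinatorial input of this kind (Cohen--Macaulayness, or shellability, which is exactly what would make your face-poset induction work) is unavoidable, and your proposal never identifies it.
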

\begin{proof}

We denote the Laurent polynomial ring
$\ZZ[x_1,x_1^{-1},\ldots,x_n,x_n^{-1}]$ by $R$. Let $A=R/I$, where
$I$ is generated by $\prod_{i\in S}(1-x_i)=0$ for any set
$S\subseteq [1,\ldots,n]$ such that $\{v_i|i\in S\}$ are not
contained in any cone of $\Sigma$. And $B=A/J$, where $J$ is
generated by $n$ Laurent polynomials ${h_j=\prod_{i=1}^n
x_i^{\langle m_j,v_i\rangle}-1}$ where $m_j$ is an integral basis of
$M$.

First we want to replace $h_j$ by
$g_j=\prod_{<m_j,v_i>>0}x_i^{<m_j,v_i>}-\prod_{<m_j,v_i><0}x_i^{-<m_j,v_i>}$.
They generate the same ideal $J$ but this collection avoids negative
powers. To prove $B$ is a free $\ZZ$ module we need to show that the
multiplication map $B \rightarrow pB$ is an injection for any prime
$p$. Let $K(g_1,\ldots,g_d)$ and $K(g_1,\ldots,g_d,p)$ be the Koszul
complexes for sequences ${g_1,\ldots,g_d}$ and ${g_1,\ldots,g_d,p}$
of elements of the ring $A$. These two Koszul complexes are related
by the following lemma.

\begin{lemma}\cite{Eisenbud}
Let $\phi: K(g_1,\ldots,g_d)\rightarrow K(g_1,\ldots,g_d)$ be the
map of multiplication by $p$. Then $K(g_1,\ldots,g_d,p)$ equals
$Cone(\phi)[-1]$. Here Cone means mapping cone of complexes.
\end{lemma}
\begin{proof}
See corollary 17.11. of \cite{Eisenbud}.
\end{proof}

According to this lemma, we get a long exact sequence of cohomology
groups:
\begin{equation}\label{les}
\begin{CD}
\ldots @>>> H^i(K(g_1,\ldots,g_d,p)) @>>> H^i(K(g_1,\ldots,g_d))\\
@>{\phi}>>H^i(K(g_1,\ldots,g_d))@>>> H^{i+1}(K(g_1,\ldots,g_d,p))
@>>> \ldots
\end{CD}
\end{equation}

We will show that all the cohomology groups of $K(g_1,\ldots,g_d)$
and $K(g_1,\ldots,g_d,p)$ vanish except at one position. More
precisely, the only non vanishing piece of \eqref{les} is:
\[
\begin{CD}
0 @>>> H^n(K(g_1,\ldots,g_d,p))\cong B @>p>>
H^n(K(g_1,\ldots,g_d))\cong B \\@>>>
H^{n+1}(K(g_1,\ldots,g_d,p))\cong B/pB @>>> 0
\end{CD}
\]
To prove this we need a result about Cohen-Macaulay properties of
Stanley-Reisner rings.

\begin{theorem}\label{danilov}
Let $A'=\ZZ[x_1,\ldots,x_n]/I$. Ring $A'$ is Cohen-Macaulay.
\end{theorem}
\begin{proof}
If we make a change of variables $x_i$ to $1-x_i$, then we see that
$A'$ is nothing but the Stanley-Reisner ring associated to
supporting polytope of $\Sigma$. It is a general fact that the
Stanley-Reisner ring of polytopes are CM over any field(See Chapter
5 of \cite{CMR}). Furthermore one can show it is actually CM over
$\ZZ$(See Exercise 5.1.25 of \cite{CMR}). We will sketch the
solution of this exercise in the following remark.
\end{proof}

\begin{remark}
Consider the flat morphism $\ZZ\to A'$. For any maximal ideal
$\frak{q}\subset A'$, we have $\frak{q}\cap \ZZ=(p)$. In order to
show $A'$ is CM it suffices to check it for each fiber, i.e.
$A'_\frak{q}/p A'_\frak{q}$ is CM for all the maximal ideal
$\frak{q}$. If $(p)$ is not $(0)$ then $A'_\frak{q}/p
A'_\frak{q}=(A'\otimes \ZZ/p\ZZ)_\frak{q}$. It is CM because
Stanley-Reisner ring over the field is CM. So we just need to show
that for any maximal ideal $\frak{q}$, the restriction $\frak{q}\cap
\ZZ$ is not $(0)$. Suppose this is the case, we will have an
inclusion $\ZZ\to A'/\frak{q}$. However, since we assume $\frak{q}\cap
\ZZ=(0)$, the field $A'/\frak{q}$ must have characteristic zero. But
this contradicts the fact that $A'$ is finitely generated over $\ZZ$
because $\QQ$ is not finitely generated over $\ZZ$.
\end{remark}

\begin{corollary}
The ring $A$ is Cohen-Macaulay.
\end{corollary}
\begin{proof}
Because $A$ is a localization of $A'$ and being CM ring is a local
property, $A$ is CM by Theorem \ref{danilov}.
\end{proof}

\begin{remark}\label{dim}
It follows from the general theory of Stanley Reisner ring (Theorem
$5.1.16$ of \cite{CMR}) that $A'$ has Krull dimension $d+1$.
\end{remark}

\begin{lemma}\cite{Eisenbud}\label{main}
Suppose $M$ is a finitely generated module over ring $A$ and
$I=(x_1,\ldots,x_n)\subset A$ is a proper ideal. If $depth(I)=r$
then $H^i(M\bigotimes K(x_1,\ldots,x_n))=0$ for $i<r$, while
$H^r(M\bigotimes K(x_1,\ldots,x_n))=M/IM$.
\end{lemma}

\begin{lemma}\label{fd}
The quotient $A/J$ is a finitely generated abelian group.
\end{lemma}
\begin{proof}
Let $\bk$ be any field and $f$ be an arbitrary map from $A/J$ to
$\bk$. Maximal ideals of $A/J$ are in one to one correspondence with
such map $f$. We want to solve for $(x_1,\ldots,x_n)$ that satisfy
equations in ideal $I$ and $J$ in the field $\bk$. Recall elements
of ideal $I$ are in form of $\prod_{i\in S}(1-x_i)$ for any subset
$S \subseteq [1,\ldots,n]$ such that one dimensional rays $v_i,i\in
S$ are not contained in any cone of $\Sigma$. So $x_i$ equals 1
outside some cone $\sigma$. Then equations in $J$ reduce to
$\prod_{v_i\in\sigma}x_i^{\langle m, v_i\rangle}$=1. We can choose
the dual vector $m$ such that $\langle m, v_i\rangle=0$ for all but
one $i$. Say $\langle m, v_i\rangle=d_i$. The number $d_i$ only
depends on the fan but not on the field $\bk$. This implies that
$1-x_i^{d_i}$ maps to 0 for any map $f$ from $A/J$ to $\bk$, i.e.
$1-x_i^{d_i}$ is contained in any maximal ideal of $A/J$. Because
$A/J$ is a finitely generated $\ZZ$ algebra the Jacobson radical
coincides with nilradical. So $(1-x_i^{d_i})^N$=0 for any $i$. We
can pick a large enough integer $N$ uniformly for any $x_i$ such
that there exists a $\ZZ$ basis consisting of monomials with powers
of each $x_i$ between 0 and $Nd_i$. This proves the statement of the
lemma.
\end{proof}

By theorem \ref{danilov}, remark \ref{dim} and lemma \ref{fd} we can
prove:
\begin{corollary}
The ideal $J=(g_1,\ldots,g_d)$ has depth $d$.
\end{corollary}
\begin{proof}
Because $A$ is CM, by the definition of CM rings
$depth(J)=codim(J)$. The quotient $A/J$ is finitely generated over
$\ZZ$, therefore, of Krull dimension one. By remark \ref{dim}
$codim(J)=d$ and $depth(J)=d$.
\end{proof}

This corollary above together with lemma \ref{main} imply the Koszul
complex $K(g_1,\ldots,g_d)$ has only one nonzero cohomology
$H^d(K(g_1,\ldots,g_d))=B=A/J$. On the other hand, the lemma
\ref{fd} imples $B/pB$ is a finite dimensional vector space over
$\ZZ/p$. By similar argument with the corollary above we get
$depth(J,p)=d+1$. Then $H^i(K(g_1,\ldots,g_d,p))=B/pB$ when $i=d+1$
and zero otherwise. Now by applying the long exact sequence
\eqref{les} we prove the multiplication map by $p$ is an injection.
This finish the proof of theorem \ref{free}.
\end{proof}
\begin{remark}
The proof of theorem \ref{free} can be generalized to the non complete toric stacks satsifying a condition called ``shellability". This is a combinatorial condition on the underlying simplicial complex of the toric stack(See \cite{CMR} for details of this definition). It is proved in \cite{CMR} that Stanley-Reisner rings of shellable simplicial complexes are Cohen-Macaulay. However, we will see in Chapter \ref{open} that Grothendieck groups of open toric stacks are not necessarily free.
\end{remark}
\section{Grothendieck groups of non-reduced stacks}
Now we remove the assumption that $N$ is a free abelian group. Then
the corresponding toric stack will have nontrivial stabilizer at the
generic point. We will generalize theorem \ref{free} to this
setting. Recall the derived Gale dual of the homomorphism $\beta :
\ZZ^n\to N$ is the homomorphism $\beta^\vee : (\ZZ^n)^\vee\to
DG(\beta)$. When $N$ is torsion free, $DG(\beta)$ is the Picard
group. The general definition of $DG(\beta)$ involves a projective
resolution of $N$. We refer to \cite{BCS} for details. Theorem
\ref{bh} can be generalized to the case when $N$ has torsion. Notice
the ring $\ZZ[x_1,x_1^{-1},\ldots,x_n,x_n^{-1}]/J$ is the
representation ring of the algebraic group $Hom(DG(\beta),\CC^*)$
when $N$ is torsion free. If $N$ has torsion then
$Hom(DG(\beta),\CC^*)$ maps to $(\CC^*)^n$ with finite kernel. After
replacing $\ZZ[x_1,x_1^{-1},\ldots,x_n,x_n^{-1}]/J$ by the
representation ring of $Hom(DG(\beta),\CC^*)$ we can generalize
Theorem \ref{bh} to non reduced case(See section $6$ of \cite{BH} for
more details).

\begin{theorem}\label{gerb}
Let $N$ be a finitely generated abelian group and $\bf\Sigma$ is a
stacky fan in $N$. The Grothendieck Group $K_0(\cX_{\bf\Sigma})$ is
a free $\ZZ$ module.
\end{theorem}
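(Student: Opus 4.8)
The plan is to reduce the torsion case to the torsion-free Theorem \ref{free} by realizing $\cX_{\bf\Sigma}$ as a gerbe over its rigidification and tracking the effect on representation rings. By the torsion analogue of Theorem \ref{bh} recalled above, $K_0(\cX_{\bf\Sigma})$ is the quotient of the representation ring $Hom(DG(\beta),\CC^*)$-module $\ZZ[DG(\beta)]$ by the Stanley--Reisner ideal $\bar I$ generated by the elements $\prod_{i\in S}(1-x_i)$ for the non-faces $S$, where now $x_i$ denotes the image of the $i$-th standard generator under $\gamma=\beta^\vee\colon \ZZ^n\to DG(\beta)$ (using $(\ZZ^n)^\vee\cong\ZZ^n$). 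Since the generators of $\bar I$ are polynomials in the $x_i$, the whole ideal is extended from the subring $\ZZ[\mathrm{im}\,\gamma]\subseteq\ZZ[DG(\beta)]$.

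First I would analyze the subgroup $\mathrm{im}\,\gamma\subseteq DG(\beta)$. The hypothesis that $Hom(DG(\beta),\CC^*)\to(\CC^*)^n$ has finite kernel is dual to the statement that $\gamma$ has finite cokernel, so $\mathrm{im}\,\gamma$ is a finite-index subgroup. Moreover $\ker\gamma$ is the image of $\beta^*\colon M\to\ZZ^n$, which is insensitive to the torsion of $N$; hence $\mathrm{im}\,\gamma\cong\ZZ^n/\mathrm{im}(\beta^*)$ is exactly the Gale dual $DG(\bar\beta)$ of the torsion-free quotient $\bar\beta\colon\ZZ^n\to\bar N:=N/N_{tor}$. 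Consequently $\ZZ[\mathrm{im}\,\gamma]/\bar I_0=K_0(\cX_{\bar{\bf\Sigma}})$, the Grothendieck group of the reduced stack attached to the same fan, which is a free $\ZZ$-module by Theorem \ref{free} (note that $DG(\bar\beta)$ may itself have torsion, but this is already covered by that theorem).

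Next I would use that a group algebra is free over the group algebra of any subgroup, with basis the coset representatives. Applied to the finite-index inclusion $\mathrm{im}\,\gamma\hookrightarrow DG(\beta)$, this shows $\ZZ[DG(\beta)]$ is a free $\ZZ[\mathrm{im}\,\gamma]$-module of finite rank $r=[DG(\beta):\mathrm{im}\,\gamma]$. Because $\bar I$ is extended from $\ZZ[\mathrm{im}\,\gamma]$, base change gives $K_0(\cX_{\bf\Sigma})\cong \ZZ[DG(\beta)]\otimes_{\ZZ[\mathrm{im}\,\gamma]}K_0(\cX_{\bar{\bf\Sigma}})$, which is therefore a free module of rank $r$ over $K_0(\cX_{\bar{\bf\Sigma}})$. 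Since $K_0(\cX_{\bar{\bf\Sigma}})$ is $\ZZ$-free, so is $K_0(\cX_{\bf\Sigma})$, completing the proof. Geometrically $r=|N_{tor}|$, reflecting the twisted sectors of the gerbe $\cX_{\bf\Sigma}\to\cX_{\bar{\bf\Sigma}}$.

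The main obstacle I anticipate is the second paragraph: carefully extracting from the derived Gale dual construction of \cite{BCS} the two facts that $\ker\gamma=\mathrm{im}(\beta^*)$ and that $\mathrm{im}\,\gamma$ coincides with $DG(\bar\beta)$ inside $DG(\beta)$, so that Theorem \ref{free} applies verbatim to the quotient ring. Everything else is formal once $\bar I$ is seen to be extended from the subring $\ZZ[\mathrm{im}\,\gamma]$. Should the identification with $DG(\bar\beta)$ prove delicate, an alternative is to re-run the Koszul and Cohen--Macaulay argument of Theorem \ref{free} directly over $\ZZ[DG(\beta)]$, the only new input being that the finite group algebra factor $\ZZ[N_{tor}]$ is a complete intersection over $\ZZ$ and hence preserves the Cohen--Macaulay property established in Theorem \ref{danilov}.
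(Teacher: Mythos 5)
Your proposal is correct and takes essentially the same route as the paper: both identify $DG(\beta_{red})$ with the image of $\gamma=\beta^\vee$ inside $DG(\beta)$, note that the Stanley--Reisner relations involve only elements of the corresponding subring, and decompose the group ring of $DG(\beta)$ into cosets of $\ZZ[DG(\beta_{red})]$ indexed by $torsion(N)$, thereby reducing freeness to Theorem \ref{free}. Your write-up is in fact slightly more careful than the paper's, which states the coset decomposition directly and leaves implicit both the identification $\ker\gamma=\mathrm{im}(\beta^*)$ and the fact that the Stanley--Reisner ideal is extended from the subring.
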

\begin{proof}
Let's denote the $N_{free}$ for the quotient $N/torsion(N)$ and
$\cX_{red}$ for the reduced stack associated to $N_{free}$. Recall
the Grothendieck group $K_0(\cX_{\bf\Sigma})$ is the quotient of
representation ring of $Hom(DG(\beta),\CC^*)$ by the ideal $I$
generated by Stanley-Reisner relations. Let's denote the Gale dual
group of the reduced stack $\cX_{red}$ by $DG(\beta_{red})$. The
quotient map $N\to N_{free}$ induces an inclusion on Gale dual
groups $DG(\beta_{red})\to DG(\beta)$, whose cokernel is isomorphic
to $torsion(N)$. Now we see the Grothendieck groups
$K_0(\cX_{\bf\Sigma})$ and $K_0(\cX_{red})$ are isomorphic to the
group rings $\ZZ[DG(\beta)]$ and $\ZZ[DG(\beta_{red})]$. If we fix a
lifting from $torsion(N)$ to $DG(\beta)$, then we get a coset
decomposition $DG(\beta)=\sqcup_{y\in torsion(N)} (y
DG(\beta_{red}))$. This induce a coset decomposition of the group
ring $\ZZ[DG(\beta)]$ such that each coset is isomorphic with
$\ZZ[DG(\beta_{red})]$. Since $\ZZ[DG(\beta_{red})]$ is torsion free
by theorem \ref{free}, we prove the theorem.
\end{proof}

\section{Grothendieck groups of non complete stacks}\label{open}
Theorem \ref{bh} holds for non complete toric stacks too. But our
proof for freeness of K-theory relies on the shellability of the
underlying simplicial complex of the toric stack. There are many non
complete toric stacks whose underlying simplicial complexes are
\emph{not} shellable. For example, we can take $\PP^1\times\PP^1$.
Denote its four toric invariant divisors by $E_1,E_2,E_3$ and $E_4$.
Let point $P$(resp. $Q$) be the intersection of $E_1$ and
$E_2$(resp. $E_3$ and $E_4$). Simplicial complex of
$\PP^1\times\PP^1\backslash \{P,Q\}$ is not shellable.

Actually, there are examples of non complete toric stacks such that
their Grothendieck groups have torsions. The following example is
due to Lev Borisov.
\begin{example}
Let's take a dimension five weighted projective stack
$\PP(1,1,1,1,2,2)$. Denote its toric invariant divisors by
$E_1,E_2,\ldots,E_6$, where $E_1,\ldots,E_4$ have weights one and
$E_5,E_6$ have weights two. Let $\cX$ be the substack
$\PP(1,1,1,1,2,2)\backslash \{(E_1\cap E_2\cap E_3\cap E_4) \cup
(E_5\cap E_6)\}$. By theorem \ref{bh}
$$K_0(\cX)=\frac{\ZZ[t,t^{-1}]}{\langle(1-t)^4,(1-t^2)^2\rangle}$$
It is easy to check that $t(1-t)^2$ is a torsion element.
\end{example}

Department of Mathematics, University of Wisconsin-Madison,

Madison, WI, 53706, U.S.

hua@math.wisc.edu
\end{document}